\numberwithin{equation}{section}
\DeclareMathOperator\Hom{Hom}
\newcommand{\D}{{\mathbb D}}
\newcommand{\R}{{\mathbb R}}
\newcommand{\C}{{\mathbb C}}
\newcommand{\N}{{\mathbb N}}
\newcommand{\Z}{{\mathbb Z}}
\newcommand{\Zsstar}{\mathbb{Z}_s^\times}
\newcommand{\F}{{\mathcal F}}
\newcommand{\rhot}{\rho_\theta}
\newtheorem{theo}{{\sc \bf Theorem}}[section]
\newtheorem{prop}[theo]{{\sc \bf Proposition}}
\begin{document}

\title{Spectral Triples for Hensel-Steinitz Algebras}

\author[Hebert]{Shelley Hebert}
\address{Department of Mathematics and Statistics,
Mississippi State University,
175 President's Cir. Mississippi State, MS 39762, U.S.A.}
\email{shebert@eastms.edu}

\author[Klimek]{Slawomir Klimek}
\address{Department of Mathematical Sciences,
Indiana University Indianapolis,
402 N. Blackford St., Indianapolis, IN 46202, U.S.A.}
\email{klimek@iu.edu}

\author[McBride]{Matt McBride}
\address{Department of Mathematics and Statistics,
Mississippi State University,
175 President's Cir., Mississippi State, MS 39762, U.S.A.}
\email{mmcbride@math.msstate.edu}

\date{\today}

\begin{abstract}
We explicitly construct Fredholm modules and spectral triples representing any element of $K$-homology groups of Hensel-Steinitz algebras.
\end{abstract}

\maketitle

\section{Introduction}

Hensel-Steinitz algebras, introduced in \cite{HKMP1}, are natural C$^*$-algebras associated to multiplication maps of the ring of $s$-adic integers $\Z_s$. A Hensel-Steinitz algebra is defined as a crossed product of $C(\Z_s)$ by a monomorphism with hereditary range defined in equation \eqref{alpha_def} below. It should be noted that Hensel-Steinitz algebras, despite similar definitions, are quite different than the algebras introduced in \cite{CuntzVershik}. Hensel-Steinitz algebras are one of many interesting examples of C$^*$-algebras with connections to number theory, see for example \cite{CuLi}, \cite{HKMP2}.

Hensel-Steinitz algebras can be related to some familiar C$^*$-algebras which allows for computation of their $K$-theory and $K$-homology groups. However, it is often not easy to pinpoint projections and unitaries generating the $K$-theory and Fredholm modules or spectral triples generating the $K$-homology groups. The purpose of this note is to describe in detail such Fredholm modules and spectral triples representing any element of $K$-homology groups of Hensel-Steinitz algebras. The $K$-theory was already sufficiently described in \cite{HKMP1}. While  $K$-theory and $K$-homology groups of C$^*$-algebras are part of their non-commutative topology, constructions of explicit representatives belong to their non-commutative geometry.

The first part of this note is a short summary of some of the results from \cite{HKMP1}.
The second part on $K$-homology, Fredholm modules, and spectral triples for Hensel-Steinitz algebras is new, however it contains ideas and techniques inspired by a similar study in \cite{RDOdo}.

\section{Hensel-Steinitz Algebras}

For $s\in\N$ with $s\ge2$, the space of $s$-adic integers $\Z_s$ is defined as consisting of infinite sums:
\begin{equation*}
\Z_s= \left\{x=\sum_{j=0}^\infty x_j s^j: 0\le x_j\le s-1, j=0,1,2,\ldots\right\}\,.
\end{equation*}
We call the above expansion of $x$ its $s$-adic expansion.  Note that $\Z_s$ is isomorphic with the countable product $\prod \Z/s\Z$ and so, when the product is equipped with the Tychonoff topology, it is a Cantor set. It is also a metric space with the usual norm, $|\cdot|_s$, which is defined as follows: 
\begin{equation*}
|0|_s=0 \textrm{ and, if }x\ne 0, |x|_s=s^{-n},
\end{equation*}
where $x_n$ is the first nonzero term in the above $s$-adic expansion.
Moreover, $\Z_s$ is an Abelian ring with unity with respect to addition and multiplication with a carry,
for more details, see \cite{monothetic}.

The main object of interest in this note is the map $\alpha:C(\Z_s)\to C(\Z_s)$ defined by
\begin{equation}\label{alpha_def}
(\alpha f)(x) = \left\{
\begin{aligned}
&f\left(\frac{x}{s}\right) &&\textrm{ if }s|x \\
&0 &&\textrm{else.}
\end{aligned}\right.
\end{equation}
Notice that
\begin{equation*}
\alpha (1)(x) = \left\{
\begin{aligned}
&1&&\textrm{ if }s|x \\
&0 &&\textrm{else}
\end{aligned}\right.
\end{equation*}
is the characteristic function of the subset of the ring $\Z_s$ consisting of those elements that are divisible by $s$. It follows from equation \eqref{alpha_def} that the range of $\alpha$ is equal to $\alpha(1)C(\Z_s)$. Thus $\alpha$ is a monomorphism with hereditary range and in particular, all well known definitions of a crossed product by an endomorphism coincide in this case, see \cite{Mu}, \cite{St}, \cite{E}. 
With this, the Hensel-Steinitz algebra, denoted by $HS(s)$ and  defined in  \cite{HKMP1}, is the  following crossed product:
\begin{equation*}
HS(s) := C(\Z_s)\rtimes_\alpha\N.
\end{equation*}

It is useful to realize $HS(s)$ as a concrete C$^*$-algebra. Let $H=\ell^2(\Z)$ and $\{E_l\}_{l\in\Z}$ be its canonical basis, then since $\Z$ is a dense subset of $\Z_s$, the mapping $C(\Z_s)\to B(H)$, $f\mapsto M_f$, given by 
$$M_fE_l = f(l)E_l$$
 is a faithful representation of $C(\Z_s)$.  

Let $V$ be the $s$-adic shift operator defined on $H$ via
\begin{equation*}
VE_l = E_{sl}\,.
\end{equation*}
A simple calculation verifies that
\begin{equation*}
V^*E_l = \left\{
\begin{aligned}
&E_{l/s} &&\textrm{ if } s|l \\
&0 &&\textrm{else.}
\end{aligned}\right.
\end{equation*}

There is an isomorphism between C$^*$-algebras, see  \cite{HKMP1} for details:
\begin{equation*}
HS(s) = C(\Z_s)\rtimes_\alpha\N \cong C^*(V,M_f:f\in C(\Z_s))\,.
\end{equation*}
This follows because $V$ and $M_f$ satisfy the relations of the Stacey's crossed product in \cite{St}: 
\begin{equation}\label{StaceyRel}
V^*V=I\textrm{ and }VM_fV^*=M_{\alpha f}\,
\end{equation}
Additionally, the algebra $C^*(V,M_f:f\in C(\Z_s))$ carries a natural gauge action defined by maps $\rhot$  with $\theta \in \R/\Z$ which is given by
\begin{equation}\label{rhotheta}
\rhot (a)=e^{2\pi i\theta \mathbb P} a e^{-2\pi i\theta\mathbb P}.
\end{equation}
Here, given $l\in\Z$ and writing $l=s^ml'$ with $l'$ not divisible by $s$, the diagonal operator $\mathbb P$ is defined by:
\begin{equation*}
\mathbb{P}E_{s^ml'} = mE_{s^ml'},\ \ \ \mathbb{P}E_{0} = 0.
\end{equation*}
The gauge action satisfies $\rhot(V)=e^{2\pi i\theta}V$ and that $\rhot(M_f)=M_f$.  
Then,  the O'Donovan condition for crossed products implies that we have $HS(s) \cong C^*(V,M_f)$.

\section{The Structure of Hensel-Steinitz Algebras}

To understand the structure of $HS(s)$ we first look at an ideal, denoted by $I_s$, which is the kernel of the representation $\pi_0$ in $B(H)$ defined by $\pi_0(M_f)=f(0)I$ and $\pi_0(V)=v$, where $v \in B(H)$ is the standard bilateral shift 
$$vE_l = E_{l+1},$$ 
so that
$$
I_s : = \textup{Ker } \pi_0.
$$

The ideal $I_s \subseteq HS(s)$ can also be described as the subalgebra generated by the operators $V^m M_{f}$, $m\geq 0$
and $M_{f}(V^*)^{-m}$, $m<0$ such that $f\in C(\Z_s)$ with $f(0)=0$.

Let $\Z_s^\times$ be the unit sphere in $\Z_s$, and notice that if $x\in \Z_s$ has the $s$-adic expansion:
\begin{equation*}
x=x_0 + x_1s + x_2s^2+\cdots
\end{equation*}
then, $x\in\Z_s^\times$ if and only if $x_0\neq0$. Observe that $\Z_s^\times$ is both a closed and open subset of $\Z_s$. Let $\mathcal{K}$ be the C$^*$-algebra of compact operators in a separable Hilbert space,   the following isomorphism of C$^*$-algebras is shown in \cite{HKMP1}:
\begin{equation*}
I_s\cong C(\Z_s^\times)\otimes\mathcal{K}\,.
\end{equation*}

We outline an argument from that paper since some of the notation is useful later. Given nonzero integer $l$, decompose $l=s^ml'$, with $l'\in\Z\subseteq\Z_s$, $|l'|_s=1$. For $\lambda \in C(\Zsstar)$, define the multiplication operator $m_\lambda :H\to H$ by the following
\begin{equation}\label{Mphi}
m_\lambda E_l=\left\{
\begin{aligned}
&\lambda(l')E_l &&\textrm{if }l\ne 0 \\
&0 &&\textrm{if }l=0.
\end{aligned}\right.
\end{equation}
Similarly, given a sequence $\chi=\{\chi(l)\} \in c_0(\Z_{\ge0})$ and the decomposition above, define $\mu_\chi:H\to H$ by
\begin{equation}\label{Muchi}
\mu_\chi E_l = \left\{
\begin{aligned}
&\chi(m)E_l &&\textrm{if }l\ne 0 \\
&0 &&\textrm{if }l=0.
\end{aligned}\right.
\end{equation}
Then, we have the following isomorphisms of $C^*$-algebras:
\begin{equation*}
C^*(M_f : f\in C(\Z_s), f(0)=0) \cong C^*(m_\lambda \mu_\chi : \lambda\in C(\Zsstar), \chi\in c_0(\Z_{\ge0}))\cong C(\Zsstar)\otimes c_0(\Z_{\ge0})\,.
\end{equation*}

Since $m_\lambda$ commutes with $V$ and $\mu_\chi$, the following two C$^*$-algebras are isomorphic:
\begin{equation*}
\begin{aligned}
&C^*(V^nM_{f}:f\in C(\Z_s), f(0)=0, n\in \Z_{\ge0}) \\  &C^*(m_{\lambda}:\lambda\in C(\Zsstar))\otimes C^*(V^n\mu_\chi: n\in \Z_{\ge0}, \chi\in c_0(\Z_{\ge0})).
\end{aligned}
\end{equation*}

Finally, we have the following isomorphism:
\begin{equation*}
C^*(V^n\mu_\chi: n \in \Z_{\ge0}, \chi\in c_0(\Z_{\ge0})) \cong \mathcal{K}\,,
\end{equation*}
since the matrix units can be easily constructed from $V^n\mu_\chi$ and they generate the algebra.

This leads to the short exact sequence describing the structure of $HS(s)$:
\begin{equation}\label{HSStructure}
0 \to C(\Z_s^\times)\otimes\mathcal{K} \to HS(s) \to C(\R/\Z) \to 0,
\end{equation}
which is instrumental in further analysis of the Hensel-Steinitz algebra. 

\section{Smooth Subalgebras}

Our next goal is to introduce natural smooth subalgebras of Hensel-Steinitz algebras $HS(s)$, see  \cite{HKMP1}.
We first define a smooth subalgebra of the ideal $I_s$. This is done by using Fourier coefficients of elements of $HS(s)$ with respect to the gauge action given by equation \eqref{rhotheta}.

We again use the decomposition of nonzero elements of $\Z_s$ as $s^m x$, $m\in \Z_{\ge0}$, $|x|_s=1$ and, given $f\in C(\Z_s)$ with $f(0)=0$, we put 
$$F(m,x)=f(s^m x).$$   

The smooth subalgebra $I_s^\infty$ of $I_s$ is defined as the space of $a\in I_s$ with a norm convergent series expansion:
\begin{equation}\label{IsSeries}
a=\sum_{n\ge0}V^nM_{F_n} + \sum_{n<0}M_{F_n}(V^*)^{-n},   
\end{equation}
where $F_n\in C_0(\Z_{\geq 0}\times\Zsstar)$ satisfy a rapid decay (RD) condition:
\begin{equation*}
I_s^\infty := \left\{a\in I_s: \{\underset{x}{\textrm{sup}}|F_n(m,x)|\}_{m,n}\textrm{ is RD}\right\}\,.
\end{equation*}

Recall from Proposition 2.1 in \cite{KMP3}, that the space of smooth compact operators $\mathcal{K}^{\infty}$  can be described as
\begin{equation*}
\mathcal{K}^{\infty} = \left\{ \sum_{n\ge0} V^n x_n + \sum_{n< 0} x_n (V^*)^{-n} : x_n\in c_0(\Z_{\geq 0}), \{x_n(m)\}_{m,n} \text{ is RD} \right\}\,.
\end{equation*} 
We have the following isomorphism of algebras:
\begin{equation*}
I_s^{\infty} \cong C(\Zsstar) \otimes \mathcal{K}^{\infty}.
\end{equation*}

Notice that the elements of $I_s^{\infty}$ don't have any extra regularity in the $\Zsstar$ coordinates, which is required for the construction of spectral triples below. For that purpose, the simplest requirement is  Lipschitz continuity.

Let $f:\Zsstar\rightarrow \C$ be a Lipschitz function and define 
$$L(f)=\textrm{inf}\{k: |f(x)-f(y)|\leq k |x-y|_s)\},$$ 
called the Lipschitz constant of $f$. Let $C_L(\Zsstar)$ be the Banach $*$-algebra of Lipschitz functions on $\Zsstar$ equipped with the norm:
\begin{equation*}
\|f\|_L:=\underset{x}{\textrm{sup}}|f(x)|+L(f).
\end{equation*}

Using the notation from equation \eqref{IsSeries}, we define
\begin{equation*}
I_{s,L}^\infty := \left\{a\in I_s: \{\|F_n(m,\cdot)\|_L\}_{m,n}\textrm{ is RD}\right\}\,.
\end{equation*}
so that
\begin{equation*}
I_{s,L}^{\infty} \cong C_L(\Zsstar) \otimes \mathcal{K}^{\infty}.
\end{equation*}

For $\phi\in C^\infty(\R/\Z)$, a Toeplitz operator $T(\phi)$ acting on $H$ is given by
\begin{equation}\label{Tseries}
T(\phi) = \sum_{m\geq 0}\phi_mV^m + \sum_{m<0}\phi_m(V^*)^{-m}
\end{equation}
where $\phi_m$ are the $m^{\textrm{th}}$ Fourier coefficients of $\phi$.  Since $\phi\in C^\infty(\R/\Z)$, it follows that the sequence $\{\phi_m\}$ is RD and the above series is norm convergent.

We define the smooth Hensel-Steinitz algebra, $HS^\infty(s)$ by
\begin{equation*}
HS^\infty(s) = \{a\in HS(s): a=T(\phi)+c \text{ with } \phi\in C^\infty(\R/\Z), c\in I_s^\infty\}\,.
\end{equation*}
It was verified in \cite{HKMP1} that $HS^\infty(s)$ is a smooth subalgebra of $HS(s)$ and in particular it is stable under the smooth functional calculus of self-adjoint elements. Below we also need the following $*$-subalgebra of $HS(s)$:
\begin{equation*}
HS^\infty_L(s) = \{a\in HS(s): a=T(\phi)+c \text{ with } \phi\in C^\infty(\R/\Z), c\in I_{s,L}^\infty\}\,.
\end{equation*}
Notice that this is a smooth, dense subalgebra of $HS(s)$ and we will refer to it as a smooth-Lipschitz subalgebra.

\section{K-Theory}

We review the $K$-theory of $HS(s)$ from \cite{HKMP1}. It can be computed from the six-term exact sequence in $K$-theory induced by the short exact sequence given by equation \eqref{HSStructure}.

Denote by $C(\Zsstar,\Z)$ the group of continuous functions on $\Zsstar$ with values in $\Z$.
Since $I_s \cong C(\Zsstar) \otimes \mathcal{K}$, the stability of $K$-theory and the known $K$-theory of continuous functions on totally disconnected compact spaces shows that 
\begin{equation*}
K_0(I_s) \cong K_0(C(\Zsstar))\cong C(\Zsstar,\Z) \quad \text{and} \quad K_1(I_s) \cong K_1(C(\Zsstar))\cong 0.
\end{equation*}
The explicit stability isomorphism of $K_0(C(\Zsstar)) \cong K_0(I_s)$ is given by 
\begin{equation*}
p\mapsto p\otimes (I-VV^*).
\end{equation*} 

The six-term exact sequence in $K$-theory simplifies to:
\begin{equation*}
0 \to \Z \to C(\Z_s^\times, \Z) \to K_0(HS(s)) \to \Z \to 0.
\end{equation*}
By \cite{HKMP1}, the range of the map $\Z \to C(\Zsstar , \Z)$ is precisely the constant functions in $C(\Zsstar, \Z)$. 
Taking the quotient, we obtain the short exact sequence
\begin{equation*}
0 \to C(\Zsstar,\Z)/\Z \to K_0(HS(s)) \to \Z \to 0,
\end{equation*}
which can be easily seen to split. This implies that the $K$-theory of $HS(s)$ is given by 
\begin{equation*}
K_0(HS(s)) \cong (C(\Zsstar,\Z)/\Z) \oplus \Z  \quad \text{and} \quad K_1(HS(s)) \cong 0. 
\end{equation*}
It is convenient to naturally identify $C(\Zsstar,\Z)/\Z$ with $C_{\{1\}}(\Zsstar,\Z)$ where $C_{\{1\}}(\Zsstar,\Z)$ are those functions in $C(\Zsstar,\Z)$ which vanish at $1\in \Zsstar$:
\begin{equation*}
C_{\{1\}}(\Zsstar,\Z) = \{f\in C(\Zsstar,\Z) : f(1)=0\}.
\end{equation*}

To proceed, we need the generators of $K_0(HS(s))$. 
From the above calculation we see that, other than the class of identity, all other $K_0(HS(s))$ classes are generated by the projections 
\begin{equation}\label{Kgenerators}
m_{1_X}(I-VV^*)\in I_s,
\end{equation}
where $X$ is a closed and open subset of $\Zsstar\subseteq \Z_s$ not containing $1\in \Zsstar$, and $1_X$ is its characteristic function.

\section{K-Homology}

From reference \cite{BB}, Section 22.3, we know that the bootstrap category $\mathcal{N}$ of separable nuclear C$^*$-algebras contains all commutative C$^*$-algebras, all AF C$^*$-algebras, and is closed under tensor products and extensions. It follows that $HS(s)$ is in $\mathcal{N}$ and we can use the Universal Coefficient Theorem of \cite{RSUCT}. Since $C_{\{1\}}(\Zsstar, \Z)$ is free (see below and also exercise 7.7.5 in \cite{HR}) the $K$-homology can be identified with the groups of of homomorphisms from the $K$-theory to $\Z$ and we obtain 
$$K^1(HS(s)) \cong 0 \quad \text{and} \quad K^0(HS(s)) \cong \Hom(C_{\{1\}}(\Zsstar, \Z)\oplus\Z,\Z)\cong \Hom(C_{\{1\}}(\Zsstar, \Z),\Z)\oplus\Z.$$
In the formula above, the last factor of $\Z$ is generated by the homomorphism $\eta: K_0(HS(s))\to\Z$ such that
\begin{equation}\label{etadef}
\eta(I)=1\ \ \textrm{ and }\ \ \eta(I-VV^*)=0.
\end{equation}
It remains to study $\Hom(C_{\{1\}}(\Zsstar, \Z),\Z)$.

\subsection{Generators}

Next, we describe a convenient set of generators of $C_{\{1\}}(\Zsstar, \Z)$. For $n=0,1,2, \ldots$ and $0\leq x<s^n$ consider the following functions on $\Z_s$:
\begin{equation*}
1_{(n,x)}(z) = \left\{
\begin{aligned}
&1 &&\textrm{if }s^n|(z-x)  \\
&0 &&\textrm{else.}
\end{aligned}\right.
\end{equation*}
Those are characteristic functions of open and closed subsets of $\Z_s$. Any two of those sets are either disjoint or one is a subset of the other.  Moreover, linear combinations of functions $1_{(n,x)}$ are dense in $C(\Z_s)$ and any function in $C(\Z_s, \Z)$ is a finite integer combination of $1_{(n,x)}$'s. However, $1_{(n,x)}$'s are not independent since
\begin{equation*}
1_{(n,x)} = \sum_{l=0}^{s-1} 1_{(n+1,x+ls^n)}.
\end{equation*}

Since $\Zsstar$ is both closed and open in $\Z_s$ we can think of $C(\Zsstar)$ as the subspace of $C(\Z_s)$ of functions vanishing outside of $\Zsstar$. From the definition, if $x\in \Z_s$ has the $s$-adic expansion:
\begin{equation*}
x=x_0 + x_1s + x_2s^2+\cdots
\end{equation*}
then, $x\in\Z_s^\times$ if and only if $x_0\neq0$. It follows that $1_{(n,x)}\in C(\Zsstar)$ if and only if $s$ does not divide $x$ and $1_{(n,x)}\in C_{\{1\}}(\Zsstar)$ if additionally $x>1$.

To proceed further we consider the set $T\subseteq\Z$ given by
\begin{equation*}
T:=\{x\in\Z: x\geq 2, s\nmid x\}.
\end{equation*}
For $x\in T$ we define
\begin{equation*}
1_{(x)}:= 1_{(n(x),x)}, 
\end{equation*}
where $n(x)\in\N$ is defined so that $s^{n(x)-1}< x<s^{n(x)}$.

\begin{prop} The collection of functions $\{1_{(x)}\}$ for all $x\in T$ form free generators of the group $C_{\{1\}}(\Zsstar, \Z)$. In other words,
any element $f\in C_{\{1\}}(\Zsstar, \Z)$ can be uniquely written as a finite sum:
\begin{equation*}
f=\sum_{x\in T} f_{(x)}\,1_{(x)}
\end{equation*}
where $f_{(x)}\in\Z$.
\end{prop}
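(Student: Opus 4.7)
The plan is to handle existence and uniqueness separately, relying on the geometry of the clopen balls $B_{(n,x)} := \{z \in \Z_s : s^n \mid (z-x)\}$ restricted to $\Zsstar$. For uniqueness, I would use a minimality argument: suppose $\sum_{x \in T_0} c_x 1_{(x)} = 0$ on $\Zsstar$ for some finite $T_0 \subseteq T$, and choose $x_0 \in T_0$ with $n(x_0)$ minimal. If $y \in T_0 \setminus \{x_0\}$ has $n(y) = n(x_0)$, then $B_{(n(x_0), x_0)}$ and $B_{(n(y), y)}$ are distinct balls at the same level, hence disjoint, so $1_{(y)}(x_0) = 0$. If instead $n(y) > n(x_0)$, then $s^{n(y)-1} < y < s^{n(y)}$ while $x_0 < s^{n(x_0)} \leq s^{n(y)-1}$; thus $x_0$ and $y$ are distinct integers both lying in $[0, s^{n(y)})$, forcing $x_0 \not\equiv y \pmod{s^{n(y)}}$ and again $1_{(y)}(x_0) = 0$. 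Evaluation at $x_0$ therefore yields $c_{x_0} = 0$, and iterating with $T_0 \setminus \{x_0\}$ gives $c_x = 0$ throughout $T_0$.

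For existence, given $f \in C_{\{1\}}(\Zsstar, \Z)$, compactness together with integer-valuedness produces an $N$ such that $f$ is constant on each clopen set $B_{(N,x)} \cap \Zsstar$, so
\[ f = \sum_{\substack{1 \leq x < s^N \\ s \nmid x,\ x \neq 1}} f(x)\, 1_{(N,x)}, \]
with the $x = 1$ term vanishing because $f(1) = 0$. It remains to express each $1_{(N,x)}$ with $x \in (1, s^N)$, $s \nmid x$, as an integer combination of generators $1_{(y)}$, which I would do by induction on $N$. When $n(x) = N$, directly $1_{(N,x)} = 1_{(x)}$. When $n(x) < N$, the identity $1_{(N-1,x)} = \sum_{l=0}^{s-1} 1_{(N, x + l s^{N-1})}$ rearranges to
\[ 1_{(N,x)} = 1_{(N-1,x)} - \sum_{l=1}^{s-1} 1_{(N, x + l s^{N-1})}. \]
The term $1_{(N-1,x)}$ is handled by the inductive hypothesis since $x \in (1, s^{N-1})$ with $s \nmid x$, while each $x + l s^{N-1}$ for $l \in \{1, \ldots, s-1\}$ lies in $(s^{N-1}, s^N)$ with $s \nmid (x + l s^{N-1})$; hence $x + l s^{N-1} \in T$ with $n(x + l s^{N-1}) = N$, and $1_{(N, x + l s^{N-1})} = 1_{(x + l s^{N-1})}$ is itself a generator.

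The main obstacle is the careful tracking of the defining inequalities $s^{n(x)-1} < x < s^{n(x)}$ throughout the argument: in the uniqueness step they prevent the chosen representative $x_0$ from being accidentally captured by a higher-level ball $B_y$, and in the existence step they guarantee that the refinement terms $x + l s^{N-1}$ land exactly in the level-$N$ generator range, allowing the recursion to terminate cleanly.
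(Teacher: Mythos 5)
Your proposal is correct and follows essentially the same route as the paper: existence via the same refinement identity $1_{(N,x)} = 1_{(N-1,x)} - \sum_{l=1}^{s-1} 1_{(N,x+ls^{N-1})}$ and induction on the level, and uniqueness by evaluating the vanishing combination at integer points of $T$ to peel off coefficients one at a time. The only (cosmetic) difference is that you order the uniqueness recursion by minimal level $n(x_0)$ while the paper evaluates at $z=2,3,\ldots$ in increasing order; both exploit the same fact that $1_{(y)}(x_0)=0$ for the remaining $y$.
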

\begin{proof} 
As noticed above, linear combinations of the $1_{(n,x)}$'s with $x\in T$ generate $C_{\{1\}}(\Zsstar)$ and since continuity of a function in $C_{\{1\}}(\Zsstar, \Z)$ is equivalent to the function being locally constant, any function in $C_{\{1\}}(\Zsstar, \Z)$ is a linear (integer) combination of $1_{(n,x)}$'s with $x\in T$.  Thus, to establish the existence part of the proposition we need to express the functions $1_{(n,x)}$ in terms of the functions $1_{(x)}$'s.  

We proceed by induction in $n$.  The base case of $n=1$ is immediate by definition.
For the inductive step, suppose $1_{(n,x)}$ can be expressed in terms of the $1_{(x)}$'s and consider the function $1_{(n+1,x)}$.  If $s^n< x<s^{n+1}$, then by definition we have that $1_{(n+1,x)}=1_{(x)}$.  On the other hand if $2\le x<s^n$,  we have that
\begin{equation*}
1_{(n+1,x)} = 1_{(n,x)} - \sum_{l=1}^{s -1}1_{(n+1,x+ls^n)} = 1_{(n,x)} -\sum_{l=1}^{s-1}1_{(x+ls^n)},
\end{equation*}
which, by assumption, shows the inductive step and hence existence.

To establish uniqueness suppose that
$$\sum_{x\in T} f_{(x)}\,1_{(x)}(z)=0.$$ 
We evaluate the sum on the left-hand side of the above equation at $z=2,3,\ldots$ and prove recursively that $f_{(x)}=0$.

Consider
\begin{equation*}
0=\sum_{x\in T} f_{(x)}1_{(x)}(2) = \sum_{x\in T} f_{(x)}1_{(n(x),x)}(2)
\end{equation*}
for $s^{n(x)-1}\le x < s^{n(x)}$.  Note that $1_{(n(x),x)}(2) =1$ if $s^{n(x)}$ divides $x-2$ and is zero else.  Since $x-2<s^{n(x)}$, it follows that from the above sum that $f_{(2)}=0$.  Following this procedure recursively, we conclude that $f_{(x)}=0$ for all $x\in T$ and hence the uniqueness follows.

\end{proof}

\subsection{Special Homomorphisms}

There are two classes of particularly interesting elements of the group $\Hom(C_{\{1\}}(\Zsstar, \Z),\Z)$.
Define $e_{(y)}\in \Hom(C_{\{1\}}(\Zsstar, \Z),\Z)$ for $y\in T$ to be the ``dual" homomorphisms to the generators $1_{(x)}$ of $C_{\{1\}}(\Zsstar, \Z)$:
\begin{equation*}
e_{(y)}(1_{(x)}) = \left\{
\begin{aligned}
&1 &&\textrm{if }y=x  \\
&0 &&\textrm{else.}
\end{aligned}\right.
\end{equation*}
The significance of $e_{(y)}$ is that any homomorphism $\Phi\in \Hom(C_{\{1\}}(\Zsstar, \Z),\Z)$ can be uniquely written as a possibly infinite sum:
\begin{equation*}
\Phi=\sum_{y\in T}\phi_{(y)}\,e_{(y)}
\end{equation*}
where $\phi_{(y)}\in\Z$. The infinite sum above is always well-defined because when evaluating $\Phi$ on $f\in C_{\{1\}}(\Zsstar, \Z)$ the resulting sum is always finite.

Another important class of homomorphisms are the evaluation maps $\delta_z$, $z=0,1,2,\ldots$:
\begin{equation*}
\delta_z(f):=f(z).
\end{equation*}
Notice that if $z\notin T$ but $x\in T$ then $\delta_z(1_{(x)}) = 0$ and so, for $z\notin T$, $\delta_z$ is a zero homomorphism on $C_{\{1\}}(\Zsstar, \Z)$. However, below it is still convenient to use that notation of evaluations at $z\not\in T$.

Let $x$ be a natural number, if $n(x)$ is such that $s^{n(x)-1}\leq x<s^{n(x)}$ we set:
\begin{equation*}
\gamma(x):= x \textrm{ mod } s^{n(x)-1},
\end{equation*}
so that $0\leq \gamma(x)<  s^{n(x)-1}$.
Notice that there are $x\in T$ such that $\gamma(x)\not\in T$. In fact, from the definition, $\gamma(x)=0$ for $x=2,3,\ldots s-1$. Additionally, $\gamma(x)=1$ if $x$ is of the form 
$$x=1+s^{n-1}+ys$$ 
for $n\geq 2$ and $y=0,1,\ldots, s^{n-1}-s^{n-2}-1$. In all other cases if $x\in T$ then $\gamma(x)\in T$.

The two classes of homomorphisms are closely related by the following formulas.
\begin{prop}\label{edeltaprop}
With the above notation we have
\begin{equation*}
\delta_z=e_{(z)}+e_{(\gamma(z))}+e_{(\gamma^2(z))}+\cdots +e_{(\gamma^i(z)) }+\cdots \textrm{ and } e_{(z)}=\delta_z-\delta_{\gamma(z)}.
\end{equation*}
In the first formula, the sum is such that $\gamma^i(z)\in T$. In the second formula, the homomorphism $\delta_{\gamma(z)}$ might be zero.
\end{prop}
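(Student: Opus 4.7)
The plan is to observe first that both sides of each asserted equality are homomorphisms $C_{\{1\}}(\Zsstar,\Z)\to\Z$, so by the previous proposition it is enough to compare their values on the free generators $1_{(x)}$ for $x\in T$. Concretely, $\delta_z(1_{(x)})=1_{(n(x),x)}(z)$ equals $1$ iff $s^{n(x)}\mid (z-x)$ and $0$ otherwise, while $\bigl(\sum_i e_{(\gamma^i(z))}\bigr)(1_{(x)})$ equals the number of indices $i$ for which $\gamma^i(z)=x$ and $\gamma^i(z)\in T$; so the first formula reduces to a purely arithmetic statement about the $\gamma$-orbit of $z$.

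The main step is the following orbit description. Writing the $s$-adic expansion of $z$ as $z=z_0+z_1s+\cdots+z_ks^k$ with $z_k\neq 0$, one checks from the definition $\gamma(x)=x\bmod s^{n(x)-1}$ that $\gamma$ simply erases the top nonzero digit: $\gamma(z)=z-z_ks^k$. Iterating, each $\gamma^i(z)$ is obtained from $z$ by zeroing out its $i$ highest nonzero digits, and the resulting value lies in $T$ precisely when the truncation has leading (lowest-position) digit nonzero and is at least $2$. From this I read off two facts. First, if $x\in T$ appears as some $\gamma^i(z)$, then $x$ is a truncation $z\bmod s^{n(x)}$ with $x_{n(x)-1}\neq 0$, so automatically $s^{n(x)}\mid (z-x)$. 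Conversely, if $s^{n(x)}\mid (z-x)$ with $x\in T$, then the first $n(x)$ $s$-adic digits of $z$ coincide with those of $x$ and $z_{n(x)-1}=x_{n(x)-1}\neq 0$, so $x=z\bmod s^{n(x)}$ is exactly the truncation obtained after removing the digits of $z$ at positions $\geq n(x)$, i.e.\ $x=\gamma^i(z)$ for the corresponding $i$. Since the iterates $\gamma^i(z)$ are pairwise distinct (their $n$-values strictly decrease), the count on the RHS is $0$ or $1$ and matches the LHS. The same analysis handles $z\notin T$: in that case either $z\in\{0,1\}$ or $s\mid z$, so no $\gamma^i(z)$ can land in $T$ (the leading-digit condition fails) and simultaneously $\delta_z(1_{(x)})=0$ because $1_{(x)}$ is supported where $z_0=x_0\neq 0$.

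Once the first formula is established, the second is a direct telescoping: applying the first formula to $z$ and to $\gamma(z)$ gives
\begin{equation*}
\delta_z=e_{(z)}+\sum_{i\geq 1}e_{(\gamma^i(z))},\qquad \delta_{\gamma(z)}=\sum_{i\geq 1}e_{(\gamma^i(z))},
\end{equation*}
where the second sum is interpreted as the empty sum (zero homomorphism) whenever $\gamma(z)\notin T$, consistent with $\delta_{\gamma(z)}=0$ on $C_{\{1\}}(\Zsstar,\Z)$ in that case. Subtracting yields $e_{(z)}=\delta_z-\delta_{\gamma(z)}$, as claimed.

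The only real obstacle is the digit-by-digit orbit analysis in the middle paragraph; everything else is bookkeeping. It is worth being careful about the boundary cases in which some $\gamma^i(z)$ drops out of $T$ (namely $z_0=1$ eventually yielding the element $1\notin T$, or $s\mid z$ pushing the orbit out of $\Zsstar$), since the formulas' statement requires restricting the sum to $\gamma^i(z)\in T$.
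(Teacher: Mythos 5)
Your proof is correct and follows essentially the same route as the paper: both evaluate $\delta_z$ on the generators $1_{(x)}$ and identify the $x$ with $\delta_z(1_{(x)})\neq 0$ as precisely the $\gamma$-iterates of $z$ lying in $T$, then obtain the second formula by subtraction. Your explicit digit-truncation description of the $\gamma$-orbit is just a more detailed rendering of the paper's inductive ``reductive process'' on $n(x)$.
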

\begin{proof} 
Fix $z\in T$. For $s^{n(x)-1}< x <s^{n(x)}$ we have that
\begin{equation*}
\delta_z(1_{(x)}) = \delta_z(1_{(n(x),x)}) = \left\{
\begin{aligned}
&1 &&\textrm{ for }s^{n(x)}|(z-x) \\
&0 &&\textrm{ else}.
\end{aligned}\right.
\end{equation*}
We want to determine for which $x$ we have $\delta_z(1_{(x)})$ is non-zero. First observe that to get nontrivial $\delta_z(1_{(x)})$, we must have $x\le z$ since otherwise $0<x-z<s^{n(z)}$ and so $s^{n(z)}$ cannot divide $x-z$.  In particular, for a fixed $z$, there are only finitely many $x$ such that $\delta_z(1_{(x)})\ne 0$. We determine them inductively in decreasing order. 

The first is $x=z$ and it is the only one such that $s^{n(x)}=s^{n(z)}$.  The next possible $s^{n(x)}$ is $s^{n(z)-1}$ and for such a choice we must have $s^{n(z)-1}$ dividing $x-z$ and hence 
$$x=z\textrm{ mod }s^{n(z)-1} = \gamma(z).$$ 
Repeating this reductive process results in the first formula.  The second formula is a direct consequence of the first.
\end{proof}

\section{Fredholm Modules}

The goal of this section is to describe in detail Fredholm modules representing any element of the $K$-homology groups of Hensel-Steinitz algebras.

\subsection{Definitions}

An odd Fredholm module over a unital C$^*$-algebra $A$, see for example \cite{HR}, is a triple $(H, \rho , \F)$ where $H$ is a Hilbert space, $\rho: A \to B(H)$ is a $*$-representation, and $\F \in B(H)$ satisfies 
$$\F^*- \F,\ I - \F^2 \in \mathcal{K}(H),\textrm{ and }[\F, \rho(a)] \in \mathcal{K}(H)$$ 
for all $a \in A$, where $ \mathcal{K}(H)$ is the algebra of compact operators in $B(H)$. Similarly, an even Fredholm module is the above information, together with a $\Z_2$-grading $\Gamma$ of $H$, such that 
$$\Gamma^* = \Gamma,\ \Gamma^2 = I,\ \Gamma \F = - \F \Gamma,\ \textrm{and  }\Gamma \rho(a) = \rho(a) \Gamma$$ 
for all $a \in A$.  It follows that $H=H_{ev} \oplus H_{odd}$, $\rho_{ev} \oplus \rho_{odd}$ . Without the loss of generality, we can assume that $\F$ is self-adjoint, so it has the form:
\begin{equation}\label{FGmatrix}
 \F = \begin{bmatrix} 0 & G \\G^* & 0 \end{bmatrix}
\end{equation}
where $G$ maps $H_{odd}$ to $H_{ev}$.
Classes of odd Fredholm modules over $A$ form the K-homology group $K^1(A) : = KK^1(A, \mathbb{C})$, while classes of even Fredholm modules over $A$ form the K-homology group $K^0(A) : = KK^0(A, \mathbb{C})$. 

For any C$^*$-algebra $A$ and $i=0,1$ there are pairings between $K_i(A)$ and $K^i(A)$, see \cite{Connes}. For our purpose we only need the pairing of the class in $K_0(A)$ of a projection $P$ in the algebra $A$ and a Fredholm module $(H_{ev} \oplus H_{odd}, \rho_{ev} \oplus \rho_{odd}, \F)$. It is given by 
$$
\langle [P]_0 , (H_{ev} \oplus H_{odd}, \rho_{ev} \oplus \rho_{odd}, \F) \rangle = \textrm{Index}(\rho_{ev}(P) G \rho_{odd}(P)), 
$$
where the index above is of the restricted operator
$$\rho_{ev}(P) G \rho_{odd}(P): \textrm{Ran}( \rho_{odd}(P)) \to \textrm{Ran}(\rho_{ev}(P))\,.$$ 

\subsection{Constructions}
First we describe a simple Fredholm module the class of which  in $K$-homology 
$K^0(HS(s))$ is the homomorphism $\eta$ of equation \eqref{etadef}.  In this one-dimensional Fredholm module the even Hilbert space is just zero while the odd is $\C$. The even representation of $HS(s)$ and the operator $G$ must then be zero while the odd representation is defined by:
\begin{equation*}
V\mapsto 1\textrm{ and }M_f\mapsto f(0).
\end{equation*}
Notice that this representation is zero on the ideal $I_s$ and in particular it is zero on the generators of $K_0(HS(s))$ coming from that ideal. It follows that the corresponding index is zero. On the other hand, the range of the odd representation is one-dimensional which implies that the index of $G$ is 1. All that means is that the pairing of this Fredhoilm module with $K_0(HS(s))$ is precisely $\eta$.

Next, given a homomorphism $\Phi\in \Hom(C_{\{1\}}(\Zsstar, \Z),\Z)$ such that
\begin{equation}\label{PhiDecomp}
\Phi=\sum_{y\in T}\phi_{(y)}\,e_{(y)}= \sum_{y\in T}\phi_{(y)}(\delta_y-\delta_{\gamma(y)})
\end{equation}
we want to describe an even Fredholm module $(H_{ev} \oplus H_{odd}, \rho_{ev} \oplus \rho_{odd}, \F)$ such that its pairing with $K_0(HS(s))\cong C_{\{1\}}(\Zsstar, \Z)\oplus\Z$ gives the homomorphism $\Phi\oplus 0$. 
Informally, we build the representation $\rho_{ev} \oplus \rho_{odd}$ from evaluations at $y$ and $\gamma(y)$ for those $y$ for which $\phi_{(y)}\ne 0$ and group the terms in equation \eqref{PhiDecomp} with positive coefficients to form $H_{odd}$ and the terms with negative coefficients to form $H_{ev}$.

More precisely, the Hilbert space for this module is a separable Hilbert space with distinguished basis labeled in the following way. $H_{odd}$ has basis $E^{{odd},+}_{(y,j,l)}$ for all $y\in T$ such that $\phi_{(y)}>0$, $j=1,\ldots,\phi_{(y)}$ and $l\in\Z_{\geq 0}$. Other basis elements are $E^{{odd},-}_{(z,k,l)}$ for all $z\in T$ such that $\phi_{(z)}<0$ and $k=1,\ldots,|\phi_{(y)}|$ and $l\in\Z_{\geq 0}$. Similarly, the distinguished basis in $H_{ev}$ is labeled $E^{{ev},+}_{(y,j,l)}$ for all $y$ such that $\phi_{(y)}>0$ and $j=1,\ldots,\phi_{(y)}$, $l\in\Z_{\geq 0}$, and also $E^{{ev},-}_{(z,k,l)}$ for all $z$ such that $\phi_{(z)}<0$, $j=1,\ldots,|\phi_{(y)}|$ and $l\in\Z_{\geq 0}$.

Since $HS(s)$ is a crossed product by an endomorphism:
\begin{equation*}
HS(s) = C(\Z_s)\rtimes_\alpha\N,
\end{equation*}
to define a representation of $HS(s)$ we need to represent its generators and verify that they satisfy the relations of Stacey's crossed product given by equation \eqref{StaceyRel}. With this in mind, the representation $\rho_{ev}$ of $HS(s)$ in $B(H_{ev})$ is defined on basis elements as follows:
\begin{equation*}
\rho_{ev}(M_f)E^{{ev},+}_{(y,j,l)} = f(s^l\gamma(y)) E^{{ev},+}_{(y,j,l)},\ \ \ \  \rho_{ev}(M_f)E^{{ev},-}_{(z,k,l)}= f(s^lz) E^{{ev},-}_{(z,k,l)},
\end{equation*}
and
\begin{equation*}
\rho_{ev}(V)E^{{ev},+}_{(y,j,l)} = E^{{ev},+}_{(y,j,l+1)},\ \ \ \  \rho_{ev}(V)E^{{ev},-}_{(z,k,l)}= E^{{ev},-}_{(z,k,l+1)}.
\end{equation*}

The definition of $\rho_{odd}$ is similar:
\begin{equation*}
\rho_{odd}(M_f)E^{{odd},+}_{(y,j,l)} = f(s^ly) E^{{odd},+}_{(y,j,l)}, \ \ \ \ \rho_{odd}(M_f)E^{{odd},-}_{(z,k,l)}= f(s^l\gamma(z)) E^{{odd},-}_{(z,k,l)}
\end{equation*}
and
\begin{equation*}
\rho_{odd}(V)E^{{odd},+}_{(y,j,l)} = E^{{odd},+}_{(y,j,l+1)},\ \ \ \  \rho_{odd}(V)E^{{odd},-}_{(z,k,l)}= E^{{odd},-}_{(z,k,l+1)}.
\end{equation*}

It ie easy to see that the relations in equation  \eqref{StaceyRel} are satisfied, and so the above formulas indeed define a continuous representation of the C$^*$-algebra $HS(s)$.

We also consider an operator $G: H_{odd}\to H_{ev}$ given on basis elements by:
\begin{equation*}
GE^{{odd},+}_{(y,j,l)} = E^{{ev},+}_{(y,j,l)} \textrm{ and } GE^{{odd},-}_{(z,k,l)}=E^{{ev},-}_{(z,k,l)}.
\end{equation*}
The definition implies that $G$ is an isometric isomorphism of Hilbert spaces:
\begin{equation*}
G^*G=I_{H_{odd}} \textrm{ and } GG^*=I_{H_{ev}}.
\end{equation*}

\begin{theo}\label{FredModTheo}
With the above notation, $(H_{ev} \oplus H_{odd}, \rho_{ev} \oplus \rho_{odd}, \F)$ where $\F$ is given by  equation \eqref{FGmatrix}, is an even Fredholm module over $HS(s)$. The class of this module in $K$-homology 
$K^0(HS(s))\cong \Hom(C_{\{1\}}(\Zsstar, \Z),\Z)\oplus\Z$ is the homomorphism $\Phi\oplus 0$.
\end{theo}
\begin{proof} 

From the definitions it follows that $\F^2=I$ and $\F$ is self-adjoint.  Thus, to check that  $(H_{ev} \oplus H_{odd}, \rho_{ev} \oplus \rho_{odd}, \F)$ is a Fredholm module we need to verify that $[\F, \rho(a)] \in \mathcal{K}(H)$ for all $a\in HS(s)$. 

From the structure of $HS(s)$ any element is of the form $a=T(\phi)+c$ with $\phi\in C(\R/\Z)$ and $c\in I_s$. This task can be simplified in the following way.  Since compact operators form a closed ideal in $B(H)$, by continuity it is enough to verify that $[\F, \rho(a)] \in \mathcal{K}(H)$ for $a$ in a dense subalgebra of $HS(s)$. A convenient choice are finite sums of the form
\begin{equation*}
a = \sum_{m\geq 0}\phi_mV^m + \sum_{m<0}\phi_m(V^*)^{-m}\,+\,\sum_{n\ge0}V^nM_{F_n} + \sum_{n<0}M_{F_n}(V^*)^{-n},
\end{equation*}
see equations \eqref{IsSeries} and \eqref{Tseries}.

Next, notice that $\F$ and $\rho(V)$ commute:
\begin{equation*}
[\F,\rho(V)]=0.
\end{equation*}
This is because, on basis elements, $\rho(V)$ shifts the $l$ index while formulas for $G$ and $G^*$ are independent of $l$.

Further simplification comes from the observation that the commutator with $\F$ is a derivation and consequently, the Leibniz rule implies that it is enough to check that $[\F, \rho(a)] \in \mathcal{K}(H)$ for algebraic generators of the subalgebra.

For $p\ge 0$ let $\chi_p\in c_0(\Z_{\ge0})$ be a sequence given by $\chi_p=\{\delta_{mp}\}_{m\ge 0}$. 
Notice that if 
$$f(s^ml)=1_{(x)}(l) \chi_p(m) \textrm{ and } f(0)=0,$$
with $l\in \Zsstar$, then 
\begin{equation*}
M_f=m_{1_{(x)}} \mu_{\chi_p}.
\end{equation*}
Notice also that 
\begin{equation}\label{chizero}
I-VV^*=\mu_{\chi_0}.
\end{equation}

It follows from the structure of the ideal $I_s$ and formulas \eqref{Mphi} and \eqref{Muchi} that the operators of the form:
\begin{equation*}
m_{1_{(x)}}V^{n} \mu_{\chi_p} \textrm{ and } m_{1_{(x)}}\mu_{\chi_p}(V^*)^n,
\end{equation*}
$x\in T$, $p\in\Z_{\ge 0}$, form generators of  the ideal $I_s$. In view of our previous remarks on Leibniz rule for the commutator with $\F$ as well as the fact that $\F$ and $\rho(V)$ commute we reduce our problem to verifying that 
\begin{equation*}
[\F, \rho(m_{1_{(x)}} \mu_{\chi_p})] \in \mathcal{K}(H).
\end{equation*}

A direct calculation for any $a\in HS(s)$ yields
\begin{equation*}
[\F,\rho(a)] = \begin{pmatrix} 0 & G\rho_{odd}(a)-\rho_{ev}(a)G \\ G^*\rho_{ev}(a)-\rho_{odd}(M_f)G^* & 0\end{pmatrix}.
\end{equation*}
Thus, we only have to check that  $G\rho_{odd}(m_{1_{(x)}} \mu_{\chi_p})-\rho_{ev}(m_{1_{(x)}} \mu_{\chi_p})G$ is a compact operator from $H_{odd}$ to $H_{ev}$.  

Notice on the ``odd'' basis elements we have the following formulas:
\begin{equation*}
\begin{aligned}
&(G\rho_{odd}(M_f)-\rho_{ev}(M_f)G)E_{(y,j,l)}^{odd,+} = (f(s^ly)-f(s^l\gamma(y)))E_{(y,j,l)}^{ev,+}\\
&(G\rho_{odd}(M_f)-\rho_{ev}(M_f)G)E_{(z,k,l)}^{odd,-} = (f(s^l\gamma(z))-f(s^lz))E_{(z,k,l)}^{ev,-}.
\end{aligned}
\end{equation*}

We study the above equation for $M_f=m_{1_{(x)}} \mu_{\chi_p}$ and the ``odd/plus'' basis elements. By Proposition \ref{edeltaprop}, we have that
\begin{equation*}
\begin{aligned}
\chi_p(l)(1_{(x)}(y)-1_{(x)}(\gamma(y)))E_{(y,j,l)}^{ev,+} &= (\delta_y(1_{(x)})-\delta_{\gamma(y)}(1_{(x)}))E_{(y,j,p)}^{ev,+} = e_{(y)}(1_{(x)})E_{(y,j,p)}^{ev,+} \\
&=\left\{\begin{aligned}
&E_{(x,j,p)}^{ev,+} &&\textrm{ if }y=x \textrm{ and }l=p\\
&0 &&\textrm{ else.}
\end{aligned}\right.
\end{aligned}
\end{equation*}
The same happens with the ``odd/minus'' basis elements except for a difference of a minus sign.  Thus $G\rho_{odd}(m_{1_{(x)}} \mu_{\chi_p})-\rho_{ev}(m_{1_{(x)}} \mu_{\chi_p})G$ is a finite rank operator and in particular, it is compact. This establishes that $(H_{ev} \oplus H_{odd}, \rho_{ev} \oplus \rho_{odd}, \F)$
 is an even Fredholm module over $HS(s)$.

Finally, we need to verify that the index homomorphism for this Fredholm module coincides with $\Phi\oplus 0$. It is enough to verify that on the identity and on the other generators $m_{1_{(x)}} \mu_{\chi_0}$'s of $K_0(HS(s))$ where $x\in T$, see formulas \eqref{Kgenerators} and \eqref{chizero}.  

Since the operator $G: H_{odd}\to H_{ev}$ is invertible it has zero index and so the Fredholm module $(H_{ev} \oplus H_{odd}, \rho_{ev} \oplus \rho_{odd}, \F)$ pairs trivially with the class of identity in $K_0(HS(s))$. Non-trivial pairing happens on the other generators of $K_0(HS(s))$ as computed below.

Let the operator
\begin{equation*}
B_x:\textrm{Ran}(\rho_{odd}(m_{1_{(x)}} \mu_{\chi_0}))\to\textrm{Ran}(\rho_{ev}(m_{1_{(x)}} \mu_{\chi_0}))
\end{equation*}
be given by 
$$B_x = \rho_{ev}(m_{1_{(x)}} \mu_{\chi_0})G\rho_{odd}(m_{1_{(x)}} \mu_{\chi_0}).$$ 
We want to compute its index. We first look at its domain.

Using the definition of $\rho$ on the ``odd'' basis elements yields
\begin{equation*}
\rho_{odd}(m_{1_{(x)}} \mu_{\chi_0})E_{(y,j,l)}^{odd,+} = \delta_y(1_{(x)})E_{(y,j,0)}^{odd,+}\quad\textrm{and}\quad \rho_{odd}(m_{1_{(x)}} \mu_{\chi_0})E_{(z,k,l)}^{odd,-}=\delta_{\gamma(z)}(1_{(x)})E_{(z,k,0)}^{odd,-}.
\end{equation*}
Using Proposition \ref{edeltaprop} we see that
\begin{equation*}
\delta_y(1_{(x)})=(e_{(y)}+e_{(\gamma(y))}+\cdots+e_{(0)})(1_{(x)}) \textrm{ and } \delta_{\gamma(z)}(1_{(x)})= (e_{(\gamma(z))}+e_{(\gamma^2(z))}+\cdots+e_{(0)})(1_{(x)}).
\end{equation*}
Consequently, we have
\begin{equation*}
\textrm{Ran}(\rho_{odd}(m_{1_{(x)}} \mu_{\chi_0})) = \textrm{span}(X_1\cup X_2)
\end{equation*}
where
\begin{equation*}
X_1 = \{E_{(y,j,0)}^{odd,+}:x=\gamma^q(y),\textrm{ for some }q\ge0\}\textrm{ and }X_2 =\{E_{(z,k,0)}^{odd,-}: x=\gamma^q(z),\textrm{ for some }q\ge1\}.
\end{equation*}

Calculating on the ``odd'' basis elements yields
\begin{equation*}
B_xE_{(y,j,0)}^{odd,+} = \delta_{\gamma(y)}(1_{(x)})\delta_y(1_{(x)})E_{(y,j,0)}^{ev,+}\quad\textrm{and}\quad B_xE_{(z,k,0)}^{odd,-}=\delta_z(1_{(x)})\delta_{\gamma(z)}(1_{(x)})E_{(z,k,0)}^{ev,-}.
\end{equation*}

We can now determine the kernel of $B_x$ on the range of $\rho_{odd}(m_{1_{(x)}} \mu_{\chi_0})$. If $x=y$, it follows that 
$$e_{(\gamma(y))}(1_{(x)})=e_{(\gamma^2(y))}(1_{(x)})=\cdots=e_{(0)}(1_{(x)})=0.$$ 
On the other hand, if $x=\gamma^q(y)$ for $q\ge1$ then 
$$\delta_y(1_{(x)})\delta_{\gamma(y)}(1_{(x)}) \neq 0.$$  
Consequently we have that the kernel of $B_x$ restricted to the range of $\rho_{odd}(m_{1_{(x)}} \mu_{\chi_0})$ is equal to the following set:
\begin{equation*}
\textrm{span}\{E_{(y,j,0)}^{odd,+}:j=1,\ldots \varphi_x\}.
\end{equation*}
Thus we have that
\begin{equation*}
\textrm{dim}(\textrm{Ker}(B_x)) = \left\{
\begin{aligned}
&\varphi_x &&\textrm{ if }\varphi>0\\
&0 &&\textrm{ if }\varphi<0.
\end{aligned}\right.
\end{equation*}

A completely analogous calculation shows that
\begin{equation*}
\textrm{dim}(\textrm{Ker}(B_x^*)) = \left\{
\begin{aligned}
&0 &&\textrm{ if }\varphi>0\\
&-\varphi_x &&\textrm{ if }\varphi<0.
\end{aligned}\right.
\end{equation*}
Putting these together shows that $\textrm{ind}(B_x)=\varphi_x=\Phi(1_{(x)})$, as claimed.
\end{proof}

\section{Spectral Triples}

The unbounded analogues of Fredholm modules are called spectral triples \cite{Rn}.
They are important tools in noncommutative geometry. In the definition of a spectral triple, as compared to a Fredholm module, we usually suppress mentioning the Hilbert space $H$ as it is implicit in the definition of a representation $\rho$. The new element is a dense $*$-subalgebra $\mathcal{A}$ of a C$^*$-algebra $A$.  See reference \cite{Ren} for a discussion of smooth subalgebras associated to spectral triples. 

\subsection{Definitions}

A spectral triple for a unital $C^*$-algebra $A$ is a triple $(\mathcal A, \rho, \mathcal D)$ where $\rho: A \rightarrow B(H)$ is a representation of $A$ on a Hilbert space $H$,  $\mathcal A$ is a dense $*$-subalgebra of $A$, and $\mathcal D$ is an unbounded self-adjoint operator in $H$ satisfying: 

(1) for every $a \in \mathcal A$, $\rho(a)$ preserves the domain of $\mathcal D$,

(2) for every $a \in \mathcal A$, the commutator $[\mathcal D, \rho(a)]$ is bounded,

(3) the resolvent $(1+ \mathcal D^2)^{-1/2}$ is a compact operator.
\newline Moreover,  $(\mathcal A, H, \mathcal D)$ is said to be an even spectral triple if there a $\Z_2$-grading of $H$, such that $H=H_{ev} \oplus H_{odd}$, $\rho_{ev} \oplus \rho_{odd}$ and 
\begin{equation*}
\mathcal D = \begin{bmatrix} 0 & D \\D^* & 0 \end{bmatrix}.
\end{equation*}

An even spectral triple for a unital $C^*$-algebra $A$ determines a homomorphism from $K_0(A)$ to $\Z$. For our purpose we only need the formula for this homomorphism for the $K_0(A)$ class of a projection $P$ in the algebra $A$. As for Fredholm modules it is given by an index 
\begin{equation*}
 P\mapsto \textrm{Index}(\rho_{ev}(P) D \rho_{odd}(P)).
\end{equation*}

\subsection{Constructions}

Let $\Phi\in \Hom(C_{\{1\}}(\Zsstar, \Z),\Z)$ be a homomorphism given by equation \eqref{PhiDecomp}. Also, let  $H_{ev} \oplus H_{odd}$ and $\rho_{ev} \oplus \rho_{odd}$ be the Hilbert space and the representation from Theorem \ref{FredModTheo}. We use those objects to define a spectral triple over $HS(s)$ corresponding to $\Phi\oplus 0$.

Consider an operator $D: \textrm{dom }D\subset H_{odd}\to H_{ev}$ given on basis elements by:
\begin{equation*}
DE^{{odd},+}_{(y,j,l)} = \Lambda(y,l) E^{{ev},+}_{(y,j,l)} \textrm{ and } DE^{{odd},-}_{(z,k,l)}=\Lambda(z,l) E^{{ev},-}_{(z,k,l)},
\end{equation*}
with $\Lambda(y,l)$ to be determined later and $\textrm{dom }D$ is the maximal domain of $D$. We need 
$$\Lambda(y,l)\to \infty$$ 
for the resolvent $(1+ \mathcal D^2)^{-1/2}$ to be a compact operator.

Consider first the question of boundedness of the commutators in two special cases: $[\mathcal{D}, \rho(V)]$ and $[\mathcal{D}, \rho(M_F)]$ with $F\in C_0(\Z_{\geq 0}\times\Zsstar)$ and is Lipschitz in the second coordinate. Those commutators are off diagonal with respect to the grading on $H$ and the matrix elements can be explicitly computed. Indeed, we have:
\begin{equation*}
( D\rho_{odd}(V)-\rho_{ev}(V)D)E^{{odd},+}_{(y,j,l)}=(\Lambda(y,l+1)-\Lambda(y,l))E^{{ev},+}_{(y,j,l+1)}
\end{equation*}
and similarly for $E^{{odd},-}_{(z,k,l)}$. It follows that
\begin{equation}\label{STnormest1}
\|[\mathcal{D}, \rho(V)]\|=\underset{y\in T,l\in\Z_{\ge0}}{\textrm{sup}}\,|\Lambda(y,l+1)-\Lambda(y,l)|.
\end{equation}
The other commutator calculation is analogous:
\begin{equation*}
( D\rho_{odd}(M_F)-\rho_{ev}(M_F)D)E^{{odd},+}_{(y,j,l)}=\Lambda(y,l)(F(l,y)-F(l,\gamma(y)))E^{{ev},+}_{(y,j,l)}
\end{equation*}
and similarly for $E^{{odd},-}_{(z,k,l)}$. It follows that
\begin{equation*}
\|[\mathcal{D}, \rho(M_F)]\|=\underset{y\in T,l\in\Z_{\ge0}}{\textrm{sup}}\,|\Lambda(y,l)|\,|F(l,y)-F(l,\gamma(y))|\leq \underset{y\in T,l\in\Z_{\ge0}}{\textrm{sup}}\,|\Lambda(y,l)|\,\|F(l,\cdot)\|_L|y-\gamma(y)|_s,
\end{equation*}
where we used Lipschitz continuity of $F$.
From the definition, if $s^{n(y)-1}\leq y<s^{n(y)}$ then $\gamma(y)= y \textrm{ mod } s^{n(y)-1}$ and so 
$$|y-\gamma(y)|_s\leq s^{-(n(y)-1)},$$
which gives
\begin{equation}\label{STnormest2}
\|[\mathcal{D}, \rho(M_F)]\|\leq \underset{y\in T,l\in\Z_{\ge0}}{\textrm{sup}}\,|\Lambda(y,l)|\,\|F(l,\cdot)\|_L\, s^{-(n(y)-1)}.
\end{equation}

A simple choice of $\Lambda$ so that the norms in equations \eqref{STnormest1} and \eqref{STnormest2} are finite and $\Lambda(y,l)\to \infty$ is
\begin{equation}\label{lambdaequ}
\Lambda(y,l)=c_1 l+c_2 s^{n(y)}
\end{equation}
for any positive constants $c_1$, $c_2$.

\begin{theo}
With the above notation $(HS_L^\infty(s), \rho_{ev} \oplus \rho_{odd}, \mathcal{D})$ is an even spectral triple.  The class of this spectral triple in $K$-homology 
$K^0(HS(s))\cong \Hom(C_{\{1\}}(\Zsstar, \Z),\Z)\oplus\Z$ is the homomorphism $\Phi\oplus 0$.
\end{theo}
\begin{proof} 
Notice that $I_{s,L}^\infty$ is a Fr\'{e}chet algebra with respect to the following usual RD norms
\begin{equation*}
\|a\|_N:=\sum_{n\in\Z,\,m\in\Z_{\geq0}}(1+|m|+|n|)^N\,\|F_n(m,\cdot)\|_L.
\end{equation*}
where $a$ is given by the series from equation \eqref{IsSeries}. Similarly, $C^\infty(\R/\Z)$ is a Fr\'{e}chet space with norms
\begin{equation*}
\|\phi\|_N:=\sum_{m\in\Z}(1+|m|)^N\,|\phi_m|.
\end{equation*}
where $\phi_m$ are  Fourier coefficients of $\phi\in C^\infty(\R/\Z)$. We say that an element of $HS_L^\infty(s)$ is {\it polynomial} if it is given by a finite sum in formulas \eqref{IsSeries} and \eqref{Tseries}. Notice that the polynomial elements of $HS_L^\infty(s)$ form a dense subalgebra.

By the definition of the maximal domain of $D$ we have that
\begin{equation*}
\begin{aligned}
&\textrm{dom }D =\{\Omega\in H_{odd}: \|D\Omega\|^2<\infty\}=\\
&\left\{\Omega= \sum_{y,j,l}\omega_{(y,j,l)}^+E_{(y,j,l)}^{odd,+} + \sum_{z,k,l}\omega_{(z,k,l)}^-E_{(z,k,l)}^{odd,-} : \sum_{y,j,l}\Lambda^2(y,l)|\omega_{(y,j,l)}^+|^2 + \sum_{z,k,l}\Lambda^2(z,l)|\omega_{(z,k,l)}^-|^2<\infty\right\}
\end{aligned}
\end{equation*}
and similarly for the adjoint. $\textrm{dom }D$ is a Hilbert space with the inner product given by $\|D\Omega\|^2$ in the above expression. Notice that finite linear combinations of the basis elements form a dense subspace in $\textrm{dom }D$.

We need to verify that $\rho(a)$ preserves the domain of $\mathcal D$  and $[\mathcal{D}, \rho(a)]$ is bounded for all $a \in  HS_L^\infty(s)$. Notice that the polynomial elements of $HS_L^\infty(s)$ preserve the dense subspace of finite linear combinations of the basis elements. Since we have
\begin{equation*}
\|\mathcal{D}\rho(a)\Omega\|\leq \|[\mathcal{D},\rho(a)]\Omega\|+\|\rho(a)\mathcal{D}\Omega\|\leq \|[\mathcal{D}, \rho(a)]\|\|\Omega\|+\|a\|\|\mathcal{D}\Omega\|,
\end{equation*}
we see that estimates on $[\mathcal{D}, \rho(a)]$ in terms of Fr\'{e}chet norms of $a$ would imply that $\rho(a)$ preserves the domain of $\mathcal D$ by an approximation argument.

For $\Lambda$ given by formula \eqref{lambdaequ} we have by equation \eqref{STnormest1}:
\begin{equation*}
\|[\mathcal{D}, \rho(V)]\|\leq c_1
\end{equation*}
which implies that 
\begin{equation*}
\|[\mathcal{D}, \rho(V^m)]\| \leq c_1|m|
\end{equation*}
for every $m\in\Z$.
Consequently, for $\phi\in C^\infty(\R/\Z)$ and the Toeplitz operator $T(\phi)$ given by equation \eqref{Tseries} we get
\begin{equation*}
\|[\mathcal{D}, \rho(T(\phi))]\| \leq c_1\sum_{m\in\Z}|m|\,|\phi_m|\leq c_1\|\phi\|_1.
\end{equation*}

Inserting $\Lambda$ from formula \eqref{lambdaequ} into inequality \eqref{STnormest2} we can estimate as follows:
\begin{equation*}
\|[\mathcal{D}, \rho(M_F)]\|\leq \underset{l\in\Z_{\ge 0}}{\textrm{sup}}\,s(c_1l+c_2)\|F(l,\cdot)\|_L \leq \sum_{l\in\Z_{\geq 0}}s(c_1l+c_2)\|F(l,\cdot)\|_L .
\end{equation*}
Consequently, for $a\in I_s$ given by equation \eqref{IsSeries} we obtain
\begin{equation*}
\|[\mathcal{D}, \rho(a)]\|\leq \sum_{n\in\Z,\,l\in\Z_{\geq0}}(s(c_1l+c_2)+c_1|n|)\,\|F_n(l,\cdot)\|_L\leq \textrm{const}\|a\|_1.
\end{equation*}
By the previous remarks, those estimates prove that $(HS_L^\infty(s), \rho_{ev} \oplus \rho_{odd}, \mathcal{D})$ is an even spectral triple.

Finally, the index calculation for the spectral triple is completely analogous to the index calculation for the Fredholm module in the proof of Theorem \ref{FredModTheo} as the extra non-zero $\Lambda(y,l)$ factor does not change the kernel and cokernel of the corresponding operators.   

\end{proof}

\end{document}